\newcommand{\Hp}{\ensuremath{ H(P)  }}
\newcommand{\Ha}{\ensuremath{ H(\alpha)  }}
\newcommand{\ZT}{\ensuremath{\mathbb{Z}[T]}}
\newcommand{\R}{\ensuremath{\mathbb{R}}}
\newcommand{\alphabar}{\ensuremath{ \overline{\alpha}  }}
\newcommand{\RTn}{\ensuremath{ \R[T]_{\leq n}  }}
\newcommand{\ZTn}{\ensuremath{ \ZT_{\leq n}  }}
\newcommand{\Cq}{\ensuremath{    \mathcal{C}(q)  }}
\newcommand{\cP}{\ensuremath{    \mathcal{P}  }}
\newtheorem{theorem}{Theorem}[section]
\newtheorem*{theoremA}{Theorem A}
\newtheorem*{theoremB}{Theorem B}
\newtheorem{proposition}[theorem]{Proposition}
\theoremstyle{remark}
\newtheorem{remark}{Remark}
\theoremstyle{definition}
\begin{document}

\baselineskip=17pt

\title[Simultaneous approximation by conjugates of an
algebraic number] {Simultaneous approximation \\ of a real number by
all conjugates \\ of an algebraic number}

\author{Guillaume ALAIN}
\address{
   D\'epartement de Math\'ematiques\\
   Universit\'e d'Ottawa\\
   585 King Edward\\
   Ottawa, Ontario K1N 6N5, Canada}
\email{gyomalin@gmail.com} \subjclass{Primary 11J13; Secondary
11J70}

\begin{abstract}{Using a method of H. Davenport and W. M. Schmidt, we
show that, for each positive integer n, the ratio $2/n$ is the
optimal exponent of simultaneous approximation to real irrational
numbers 1) by all conjugates of algebraic numbers of degree $n$, and
2) by all but one conjugates of algebraic integers of degree $n+1$.}
\end{abstract}

\thanks{Work partially supported by NSERC}

\maketitle

\vspace{-1cm}
\section{Introduction}

An outstanding problem in Diophantine approximation, motivated
initially by Mahler's and Koksma's classifications of numbers, is to
provide sharp estimates for the approximation of a real number by
algebraic numbers of bounded degree. Starting with the pioneer work
\cite{Wi} of E.~Wirsing in 1961, this problem has been studied by
many authors and extended in several directions. A good account of
this can be found in Chapter 3 of \cite{Bu}. For our purpose, let us
simply mention that, in 1969, H.~Davenport and W.~M.~Schmidt gave
estimates for the approximation by algebraic integers \cite{DS} and
that, more recently, D.~Roy and M.~Waldschmidt looked at
simultaneous approximations by several conjugate algebraic integers
\cite{RW}. While the latter work was limited to at most one quarter
of the conjugates, we consider here the problem of simultaneous
approximation of a real number by all (resp. all but one) conjugates
of an algebraic number (resp. algebraic integer). Upon defining the
\emph{height} \Hp\ of a polynomial $P \in \R[T]$ to be the largest
absolute value of its coefficients, and the \emph{height} \Ha\ of an
algebraic number $\alpha \in \mathbb{C}$ to be the height of its
irreducible polynomial in \ZT, our main result reads as follows.

\begin{theoremA}
Let $\xi \in \R \backslash \mathbb{Q}$ and let $n \in \mathbb{N}^*$.
There exist positive constants $c_1, c_2$ depending only on $\xi$
and $n$ with the following properties.

\begin{itemize}
\item[(i)] There are infinitely many algebraic numbers $\alpha$ of degree
$n$ such that
   \begin{equation} \label{thm_bornemax1}
        \max_{\alphabar}\left| \xi - \alphabar \right| \leq c_1 \Ha^{-2/n}
   \end{equation}
   where the maximum is taken over all conjugates $\alphabar$ of $\alpha$.
\item[(ii)] There are infinitely many algebraic integers $\alpha$ of
degree $n+1$ such that
\begin{equation} \label{thm_bornemax2}
        \max_{\alphabar \neq \alpha}\left| \xi - \alphabar \right|
        \leq c_2 \Ha^{-2/n}
\end{equation}
   where the maximum is taken over all conjugates $\alphabar$
   different from $\alpha$.
\end{itemize}
\end{theoremA}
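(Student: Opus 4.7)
The plan is to follow the method of Davenport and Schmidt from \cite{DS}: apply Minkowski's theorem on linear forms to the lattice of integer polynomials of bounded degree to produce a polynomial whose Taylor expansion at $\xi$ has very small coefficients, and then recover algebraic approximations of $\xi$ from its roots.

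For part (i), I would fix a parameter $X \geq 1$ and a target error $\delta = c X^{-2/n}$ for a constant $c = c(\xi,n)$ to be chosen. The change of basis $P \mapsto (Q_0, \ldots, Q_n)$ with $Q_k = P^{(k)}(\xi)/k!$ is upper triangular with unit diagonal on $\RTn$, hence preserves the unit covolume of $\ZTn$. Next I apply Minkowski's theorem, in an inhomogeneous version chosen so as to additionally force $|a_n|$ to have size $\asymp X$, to the convex body
\[
\left\{P \in \RTn : |Q_k| \leq \binom{n}{k}\,X\,\delta^{n-k} \text{ for } k<n,\ |a_n - X| \leq X/2\right\}.
\]
A direct volume computation gives volume $\asymp X^{n+1}\delta^{n(n+1)/2}$, which reaches Minkowski's threshold $2^{n+1}$ precisely when $\delta \asymp X^{-2/n}$; this is the origin of the exponent in the theorem. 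The output is an integer polynomial $P_X$ of degree exactly $n$ with tightly controlled Taylor coefficients at $\xi$.

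The root analysis is then straightforward: writing $Q(S) = P_X(\xi+S) = a_n S^n + \sum_{k<n} Q_k S^k$ and using $|Q_k|/|a_n| \leq C\binom{n}{k}\delta^{n-k}$, a Rouch\'e-type estimate shows that $Q(S)\neq 0$ for $|S|\geq C_n \delta$, so all roots of $P_X$ lie in the disk of radius $C_n\delta$ around $\xi$. Since the minimal polynomial of any root $\alpha$ divides $P_X$, one gets $\Ha = O(X)$ and every conjugate of $\alpha$ is itself a root of $P_X$, whence $\max_{\alphabar}|\xi - \alphabar| \leq C_n\delta \leq C'\Ha^{-2/n}$. Varying $X$ along an unbounded sequence yields infinitely many distinct such $\alpha$. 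For part (ii) I would run the same strategy on the affine lattice of monic integer polynomials of degree $n+1$, aiming for $|Q_n| \asymp X$ and $|Q_k|\leq C X\delta^{n-k}$ for $k<n$; the approximation $P_X(T) \approx (T-\xi)^n\bigl(Q_n + (T-\xi)\bigr)$ exhibits one ``outlier'' root at distance $\asymp X$ from $\xi$ and $n$ other roots inside the disk of radius $C\delta$ around $\xi$, which is precisely the content of (ii).

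The principal obstacle I anticipate is ensuring the algebraic number produced has the \emph{exact} degree $n$ (resp.\ $n+1$) rather than a proper divisor thereof. The inhomogeneous Minkowski step secures $|a_n|\asymp X$, so the auxiliary polynomial $P_X$ has the right degree, but an irreducible factor of $P_X$ containing a given root could a priori have strictly smaller degree. Ruling this out is the delicate point: it requires combining the irrationality of $\xi$ with Liouville-type lower bounds on $|R(\xi)|$ for integer polynomials $R$ of strictly smaller degree and bounded height, in order to show that the smallness of $P_X^{(k)}(\xi)$ for $k<n$ cannot be realized by a polynomial of degree less than $n$, at least along a suitable subsequence of $X$.
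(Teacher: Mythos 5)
You correctly identify the crux of the matter: producing an approximant of \emph{exact} degree $n$ (resp.\ $n+1$). But the resolution you sketch --- a Liouville-type lower bound on $|R(\xi)|$ for lower-degree integer polynomials $R$, perhaps along a subsequence of $X$ --- cannot work for general irrational $\xi$. If $\xi$ is a Liouville number, $|R(\xi)|$ can be made smaller than any fixed negative power of $H(R)$ already for linear $R$, so no Liouville-type lower bound is available, and yet the theorem is asserted for every $\xi \in \R \setminus \mathbb{Q}$. The paper sidesteps this entirely by making the output polynomial irreducible of the right degree \emph{by construction}, via Eisenstein's criterion (this is the observation of Bugeaud and Teuli\'e cited in the introduction): it produces $P$ with $P \equiv T^n + 2 \pmod{4}$, hence Eisenstein at $2$ and of degree exactly $n$, while simultaneously keeping the Taylor coefficients $P^{[k]}(\xi)$ of size $\asymp q^{2k-n}$.

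Realizing this congruence with control on the Taylor coefficients requires more than a single lattice point from a (possibly inhomogeneous) Minkowski theorem; it requires a full $\mathbb{Z}$-basis $\{P_0,\dots,P_n\}$ of $\ZTn$ lying inside a bounded dilate $2^n \Cq$ of the convex body. The paper gets this explicitly by taking $P_j = L_0^j L_1^{n-j}$ where $L_0, L_1$ come from two consecutive convergents of $\xi$ (Proposition~\ref{prop_last_minimum}, using that consecutive convergents have Wronskian $\pm 1$). Given such a basis, one expands a target polynomial $R(T) = 2c_5\sum_k q^{2k-n}(T-\xi)^k$ in it with real coordinates $\theta_j$, expands $T^n+2$ in it with integer coordinates $b_j$, and then picks integers $a_j$ with $a_j \equiv b_j \pmod{4}$ and $|a_j - \theta_j|\le 2$; the polynomial $P=\sum a_j P_j$ is Eisenstein of degree $n$ with $P^{[k]}(\xi)\asymp q^{2k-n}$ (Proposition~\ref{prop_exist_irred}). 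Your volume count for the exponent $2/n$ and your translation from small Taylor coefficients to small root distances are sound and parallel the paper's Proposition~\ref{prop_equivalence_racines}, but the basis-plus-Eisenstein mechanism is essential and must replace the inhomogeneous-Minkowski-plus-Liouville plan.
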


In the case $n=2$, this improves the estimates of the corollary in
Section 1 of \cite{AR}. In fact, as we will see in the next section,
the statement of part (i) is optimal up to the value of $c_1$ for
each $\xi \in \R \backslash \mathbb{Q}$, while the statement of part
(ii) is optimal up to the value of $c_2$ at least for quadratic
irrational values of $\xi$. This seems to be the first instance
where an optimal exponent of approximation is known for all values
of the degree $n$ in this type of problem.  The fact that we can
control the degree of the approximations originates from an
observation of Y.~Bugeaud and O.~Teuli\'e in \cite{BT}.

An irrational real number $\xi$ is said to be \emph{badly
approximable} if there exists a constant $c>0$ such that $|\xi -
p/q| \geq c q^{-2}$ for any rational number $p/q$. This is
equivalent to asking that $\xi$ has bounded partial quotients in its
continued fraction expansion (see Theorem 5F in Chapter 1 of
\cite{Sc}). For these numbers, we can refine Theorem A as follows.

\begin{theoremB}
Let $\xi \in \R \backslash \mathbb{Q}$ be badly approximable and let
$n \in \mathbb{N}^*$. Then there exist positive constants $c_1,
\ldots, c_4$ depending only on $\xi$ and $n$ with the following
properties. For each real number $X\geq1$, there is an algebraic
number $\alpha$ of degree $n$ satisfying (\ref{thm_bornemax1}) and
$c_3 X \leq \Ha \leq c_4 X$. There is also an algebraic integer
$\alpha$ of degree $n+1$ satisfying (\ref{thm_bornemax2}) and $c_3 X
\leq \Ha \leq c_4 X$.
\end{theoremB}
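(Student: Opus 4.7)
The strategy is to revisit the Minkowski-based construction underlying Theorem~A and to use the badly approximable hypothesis to control the height of the resulting polynomial both from above and from below, so that the heights of the approximating polynomials fill every interval $[c_3 X, c_4 X]$.

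For each real $X \geq 1$, consider the symmetric convex body
\begin{equation*}
  K_X = \left\{ P \in \RTn : \left|\frac{P^{(k)}(\xi)}{k!}\right| \leq c_0\, X^{(2k-n)/n},\ k = 0, 1, \ldots, n \right\}.
\end{equation*}
Since $\sum_{k=0}^n (2k-n)/n = 0$ and the linear forms $P \mapsto P^{(k)}(\xi)/k!$ are related to the coefficients of $P$ in the $T$-basis by a unipotent upper triangular change of variables, the volume of $K_X$ equals $2^{n+1} c_0^{n+1}$ independently of $X$. For $c_0$ large enough, Minkowski's theorem produces a nonzero $P_X \in K_X \cap \ZTn$, and the usual estimate on the roots of a polynomial in terms of its Taylor coefficients at $\xi$ shows that every complex root of $P_X$ lies within $C X^{-2/n}$ of $\xi$, which is the bound required by \eqref{thm_bornemax1}.

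The core step --- and the main obstacle --- is the lower bound $H(P_X) \geq c_3 X$. If instead $H(P_X) \leq X^{1-\eta}$ for some $\eta > 0$, then $P_X$ would be an unusually small integer polynomial with $|P_X(\xi)| \leq c_0 X^{-1}$, and a Khintchine-type transference argument should lead to a contradiction. The badly approximable property of $\xi$, in the form $|q\xi - p| \geq c\, q^{-1}$ for all $(p,q) \in \mathbb{Z}^2 \setminus \{0\}$, supplies the crucial input: interpreted as a nonvanishing estimate for linear forms in $1, \xi, \ldots, \xi^n$ and propagated through the derivatives $P^{(k)}(\xi)$, it should force the required lower bound on $H(P_X)$. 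This is exactly where the badly approximable hypothesis is essential: in the generic case, the Davenport--Schmidt minimal polynomial sequence may have arbitrarily large gaps in height, and it is precisely those gaps that Theorem~B claims to close.

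Once $c_3 X \leq H(P_X) \leq c_4 X$ is established, the remainder runs parallel to Theorem~A: one verifies that $P_X$ has degree exactly $n$, extracts an irreducible factor $Q$ of degree $n$ and comparable height, and takes for $\alpha$ a root of $Q$ near $\xi$ to obtain part (i). Part (ii) is handled by the same scheme applied to monic integer polynomials of degree $n+1$ --- equivalently, to simultaneous rational approximation of the tuple $(\xi, \xi^2, \ldots, \xi^{n+1})$ --- where the same badly approximable transference yields a monic $R \in \ZT$ of degree $n+1$ having $n$ roots within $O(X^{-2/n})$ of $\xi$ and height comparable to $X$, from which an algebraic integer fulfilling \eqref{thm_bornemax2} can be read off.
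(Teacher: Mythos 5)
Your proof plan diverges from the paper's in a way that creates a genuine gap at the very step you yourself flag as ``the core step and the main obstacle''. In the paper, the lower bound $\Hp \gg X$ does \emph{not} come from the badly approximable hypothesis via a transference argument: it is built directly into the construction. Proposition~\ref{prop_exist_irred} produces, for each convergent denominator $q$, a polynomial whose Taylor coefficients at $\xi$ satisfy two-sided estimates $c_5 q^{2k-n} \leq |P^{[k]}(\xi)| \leq 3c_5 q^{2k-n}$; in particular $|P^{[n]}(\xi)|$, the leading coefficient, is $\asymp q^n$, so $\Hp \asymp q^n$ automatically. This is achieved by choosing integer coordinates $a_j$ close to real coordinates $\theta_j$ of a fixed ``target'' polynomial $R(T)=2c_5\sum q^{2k-n}(T-\xi)^k$, not by a pure Minkowski existence argument. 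The badly approximable hypothesis then enters only to guarantee that consecutive convergent denominators have bounded ratio, so that for every $X\geq 1$ one can pick $q \asymp X^{1/n}$ and hence $\Hp \asymp X$.

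Your proposed transference step is not merely unproven but actually false as stated. Take $\xi=\sqrt{2}$, which is badly approximable, and $n=2$. The polynomial $P(T)=T^2-2$ satisfies $|P^{[0]}(\xi)|=0$, $|P^{[1]}(\xi)|=2\sqrt{2}$, $|P^{[2]}(\xi)|=1$, so it lies in your body $K_X$ for every $X\geq 1$ once $c_0\geq 2\sqrt{2}$, yet its height is a fixed constant. Minkowski's theorem on $K_X$ gives you no way to avoid selecting such a low-height polynomial, and no ``Khintchine-type'' argument based on the one-dimensional Diophantine property of $\xi$ can rule it out, since the badly approximable property concerns only linear (degree-one) forms in $\xi$ and says nothing about higher-degree polynomial values. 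More generally, for $n\geq 2$, the generic Davenport--Schmidt minimal polynomial sequence associated to a convex body of this type can indeed stall at low heights; that is why the paper replaces the Minkowski selection by an explicit perturbation of the target $R$.

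A second, independent gap: you write that one should ``verify that $P_X$ has degree exactly $n$, extract an irreducible factor $Q$ of degree $n$ and comparable height.'' Nothing in the Minkowski step gives you this. The polynomial $P_X$ might have degree less than $n$, or be reducible with all its factors of low degree (again $T^2-2$ already works, or powers of low-degree polynomials). The paper forces both the degree and the irreducibility by arranging $P \equiv T^n+2 \pmod 4$ and $Q \equiv T^{n+1}+2 \pmod 4$, so that Eisenstein at the prime $2$ applies; this is precisely the Bugeaud--Teuli\'e device mentioned in the introduction, and it is indispensable for controlling the degree of $\alpha$. Your outline omits it entirely.

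In short, your plan replaces the paper's explicit construction (which yields two-sided Taylor-coefficient bounds, hence a height that is \emph{forced} to be $\asymp X$, and which builds in irreducibility via Eisenstein) by a Minkowski existence step plus a hoped-for transference, and this substitution does not work. To repair it you would need to (a) upgrade the Minkowski argument to an argument giving a lower bound on the leading Taylor coefficient, which is exactly the perturbation trick of Proposition~\ref{prop_exist_irred}, and (b) arrange irreducibility and degree exactly $n$ (resp.\ monic of degree $n+1$), which again requires a congruence condition, not just an extraction of factors. Once you adopt those two devices, you are back to the paper's proof, and the badly approximable hypothesis serves only the much more modest purpose of filling the gaps between consecutive convergent denominators.
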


The proof of both results follows the method introduced by Davenport
and Schmidt in \cite{DS}. Let \RTn\ denote the real vector space of
polynomials of degree $\leq n$ in $\R[T]$, and let \ZTn\ denote the
subgroup of polynomials with integral coefficients in \RTn. We first
provide estimates for the last minimum of certain convex bodies of
\RTn\ with respect to \ZTn\ and then deduce the existence of
polynomials of \ZTn\ with specific inhomogeneous Diophantine
properties. This is done in Section 3. In Section 4, we show that
these polynomials have roots which fulfil the requirements of
Theorem A or B.

Throughout this paper, all implied constants in the Vinogradov
symbols $\gg$, $\ll$ and their conjunction $\asymp$ depend only on
$\xi$ and $n$.

\section{Optimality of the exponents of approximation}

Let $\xi \in \R \backslash \mathbb{Q}$ and let $n \in \mathbb{N}^*$.
If $n\ge 2$, the result in part (i) of Theorem A is optimal
up to the value of the implied constant since, for any algebraic
number $\alpha$ of degree $n$ with conjugates $\alpha_1, \ldots,
\alpha_n$, the discriminant $D(\alpha)$ of $\alpha$ satisfies
\[
\left|D(\alpha)\right|
  \leq \Ha^{2(n-1)} \prod_{1\leq i< j \leq n}
  \left| \alpha_i - \alpha_j \right|^2 \leq \Ha^{2(n-1)} \left( 2
\max_{1\leq i \leq n} |\xi -\alpha_i| \right)^{n(n-1)}
\]
Since $D(\alpha)$ is a non-zero integer, its absolute value is $\geq
1$, and thus we deduce that
\[
\max_{1\leq i \leq n} \left| \xi - \alpha_i \right| \geq
\frac{1}{2}\Ha^{-2/n}
\]
(compare with \S5 of \cite{Wi}).  If $n=1$, the result is
optimal for any badly approximable $\xi$.  Note that a similar
argument also shows that, for any algebraic integer $\alpha$ of
degree $n+1$ with conjugates $\alpha_1,\dots,\alpha_{n+1}$, we
have $\max_{1\leq i \leq n} \left| \xi - \alpha_i \right| \geq
(1/2)\Ha^{-2/(n-1)}$.

Similarly, the result in part (ii) of Theorem A is optimal up to the
value of the implied constant when $\xi$ is a quadratic irrational
number. To prove this, suppose that an algebraic integer $\alpha$ of
degree $n+1$ has conjugates $\alpha_1, \ldots, \alpha_{n+1}$
distinct from $\xi$ with the first $n$ satisfying
\[
\max_{1\leq i \leq n} \left| \xi - \alpha_i \right| \leq 1
\]
Let $Q(T) \in \ZT$ be the irreducible polynomial of $\xi$ over
$\mathbb{Z}$. Since $\alpha$ is an algebraic integer, the product
$Q(\alpha_1) \cdots Q(\alpha_{n+1})$ is a rational integer and since
it is non-zero (because $\xi$ is not a conjugate of $\alpha$), we
deduce that
\[
1 \leq \prod_{i=1}^{n+1}\left| Q(\alpha_i) \right|.
\]
For each $i=1, \ldots, n$, we have $|Q(\alpha_i)| \ll |\xi -
\alpha_i|$ since $\xi$ is a root of $Q$ and $|\xi - \alpha_i| \leq
1$. We also have $|Q(\alpha_{n+1})| \ll \max\{1, |\alpha_{n+1}|\}^2$
since $Q$ has degree 2. This gives
\[
1 \ll \Ha^2 \prod_{i=1}^{n} |\xi - \alpha_i|
\]
and consequently $\max_{1 \leq i \leq n}|\xi - \alpha_i| \gg
\Ha^{-2/n}$.


\begin{remark}
It would be interesting to know if there exists as well
transcendental numbers $\xi$ for which the exponent $2/n$ for \Ha\
in Theorem A part (ii) is best possible.
\end{remark}

\begin{remark}
The case where $\xi \in \mathbb{Q}$ is not interesting as it leads
to much weaker estimates. In this case, one finds that, for each
algebraic number $\alpha$ of degree n with $\alpha \neq \xi$, one
has $\max_{\alphabar}|\xi - \alphabar| \gg \Ha^{-1/n}$, and that,
for each algebraic integer $\alpha$ of degree $n+1$ with $\alpha
\neq \xi$, one has $\max_{\alphabar \neq \alpha}|\xi - \alphabar|
\gg \Ha^{-1/n}$.
\end{remark}

\section{Construction of polynomials}

Throughout this section, we fix an irrational real number $\xi \in
\mathbb{R} \backslash \mathbb{Q}$ and a positive integer $n \geq 1$.
For each integer $q \geq 1$, we denote by $\mathcal{C}(q)$ the
convex body of \RTn\ which consists of all polynomials $P\in \RTn$
satisfying
\[
|P^{[k]}(\xi)| \leq q^{2k-n} \ \ (0\leq k \leq n)
\]
where $P^{[k]}(\xi) = P^{(k)}(\xi)/k!$ denotes the $k$-th divided
derivative of $P$ at $\xi$ (the coefficient of $(T-\xi)^k$ in the
Taylor expansion of $P$ at $\xi$). We first prove :

\begin{proposition}
\label{prop_last_minimum} Let $q$ be the denominator of a convergent
of $\xi$. Then the last minimum of \Cq\ with respect to the lattice
\ZTn\ is $\leq 2^n$, and its first minimum is $\geq \left(
2^{n^2}(n+1)! \right)^{-1}$. Moreover, the convex body $2^n \Cq$
contains a basis of \ZTn\ over $\mathbb{Z}$.
\end{proposition}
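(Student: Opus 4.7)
The plan is to produce an explicit $\mathbb{Z}$-basis of \ZTn\ sitting inside $2^n\Cq$. This single construction will simultaneously yield the upper bound on the last minimum, the ``moreover'' clause, and (via Minkowski's second theorem) the lower bound on the first minimum.

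First I would set up the basis. Write $p/q=p_m/q_m$ for the given convergent and let $p'/q'=p_{m-1}/q_{m-1}$ be the preceding one, with the usual convention $p_{-1}/q_{-1}=1/0$ in case $m=0$. Put $L(T)=qT-p$ and $R(T)=q'T-p'$. The determinantal identity $qp'-pq'=\pm 1$ makes $\{L,R\}$ a $\mathbb{Z}$-basis of $\ZT_{\leq 1}$. The candidate basis is
\[
P_i(T)=L(T)^{n-i}R(T)^i,\qquad 0\le i\le n.
\]
These are integer polynomials of degree $n$, and the matrix expressing $(P_0,\dots,P_n)$ in $(1,T,\dots,T^n)$ is the $n$-th symmetric power of the matrix of $(L,R)$ in $(1,T)$. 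Since $\det\operatorname{Sym}^n$ of a $2\times 2$ matrix equals its determinant to the power $n(n+1)/2$, the change-of-basis determinant is $(\pm 1)^{n(n+1)/2}=\pm 1$, so the $P_i$ really form a $\mathbb{Z}$-basis of \ZTn.

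Next I would bound the divided derivatives of $P_i$ at $\xi$. Since $L$ and $R$ are linear, the binomial theorem applied after writing $L(T)=q(T-\xi)+L(\xi)$ and similarly for $R$ gives
\[
(L^{n-i})^{[a]}(\xi)=\binom{n-i}{a}q^{a}L(\xi)^{n-i-a},\qquad (R^{i})^{[b]}(\xi)=\binom{i}{b}{q'}^{b}R(\xi)^{i-b}.
\]
The Leibniz rule $P_i^{[k]}(\xi)=\sum_{a+b=k}(L^{n-i})^{[a]}(\xi)(R^{i})^{[b]}(\xi)$, combined with the convergent estimates $|L(\xi)|,|R(\xi)|\le 1/q$ and $q'\le q$, collapses every exponent of $q$ to exactly $2k-n$, and the Vandermonde identity $\sum_{a+b=k}\binom{n-i}{a}\binom{i}{b}=\binom{n}{k}$ turns the sum of binomial coefficients into a single one. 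The outcome is $|P_i^{[k]}(\xi)|\le \binom{n}{k}q^{2k-n}\le 2^n q^{2k-n}$, so $P_i\in 2^n\Cq$ for every $i$. This simultaneously provides the basis in $2^n\Cq$ and forces the last minimum to satisfy $\lambda_{n+1}\le 2^n$.

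Finally, the lower bound on the first minimum falls out of Minkowski's second theorem. In the coordinates $(P^{[0]}(\xi),\dots,P^{[n]}(\xi))$ the linear map from \RTn\ is upper triangular with ones on the diagonal, so \ZTn\ has covolume $1$ and $\operatorname{vol}(\Cq)=2^{n+1}\prod_{k=0}^{n}q^{2k-n}=2^{n+1}$ because $\sum_{k=0}^{n}(2k-n)=0$. Hence
\[
\lambda_1\cdots\lambda_{n+1}\ \ge\ \frac{2^{n+1}}{(n+1)!\,\operatorname{vol}(\Cq)}\ =\ \frac{1}{(n+1)!},
\]
and since $\lambda_2,\dots,\lambda_{n+1}\le \lambda_{n+1}\le 2^n$ gives $\lambda_2\cdots\lambda_{n+1}\le 2^{n^2}$, we conclude $\lambda_1\ge (2^{n^2}(n+1)!)^{-1}$.

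The only genuinely fiddly point is the bookkeeping in the second step: keeping track of all three sources of the factor $q$ (the leading coefficients $q,q'$ and the smallness of $L(\xi),R(\xi)$) so that the $q$-exponents cancel to the required $2k-n$, and then recognising the resulting sum of binomial coefficients as Vandermonde. The edge case $m=0$, where $R=-1$ is constant, is handled by the same formulas with $q'=0$ and $q=1$.
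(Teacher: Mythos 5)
Your proof is correct and follows essentially the same route as the paper: construct the $\mathbb{Z}$-basis $P_i = L^{n-i}R^i$ from the two consecutive convergent linear forms, bound the divided derivatives by $\binom{n}{k}q^{2k-n} \le 2^n q^{2k-n}$ to place the basis in $2^n\Cq$ and control $\lambda_{n+1}$, then invoke Minkowski's second theorem (via the unipotent coordinate change and $\operatorname{vol}(\Cq)=2^{n+1}$) to bound $\lambda_1$ from below. The only cosmetic difference is that you certify the $P_i$ are a basis by a symmetric-power determinant computation, whereas the paper simply observes that they span $\ZTn$ and then counts the rank; both work.
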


\begin{proof}
Put $L_1 = qT - p$ where $p/q$ denotes a convergent of $\xi$ with
denominator $q$. If $q>1$, we also define $L_0 = q_0 T - p_0$ where
$p_0/q_0$ is the previous convergent of $\xi$ (in reduced form). If
$q=1$, we simply take $L_0 = 1$. The theory of continued fractions
tells us that these linear forms satisfy
\begin{equation}\label{eq_L1L2}
|L_i(\xi)| \leq q^{-1}\ \ \textrm{and} \ \ |L_i'(\xi)| \leq q
\end{equation}
for $i=0,1$, and moreover that their determinant (or Wronskian) is
$\pm 1$ (see \S 4 in Chapter I of [Sc]). The latter fact means that
$\{ L_0, L_1 \}$ spans $\mathbb{Z}[T]_{\leq 1}$ over $\mathbb{Z}$.
Therefore the products $P_j = L_0^j L_1^{n-j}\ (0 \leq j \leq n)$
span \ZTn\ over $\mathbb{Z}$ and, since the rank of \ZTn\ is $n+1$,
they form in fact a basis of \ZTn\ over $\mathbb{Z}$. Using
(\ref{eq_L1L2}), we also find that
\[
|P_j^{[k]}(\xi)| \leq \binom{n}{k} q^{2k-n} \leq 2^n q^{2k-n}\quad
(0\leq j,k \leq n).
\]
Thus $\{P_0, \ldots, P_n \}$ is a basis of \ZTn\ contained in $2^n
\Cq$. This proves the last assertion of the proposition as well as
the fact that the last minimum of \Cq\ is $\leq 2^n$.

Identify \RTn\ with $\mathbb{R}^{n+1}$ under the map which sends a
polynomial $a_0 + a_1 T + \ldots + a_n T^n$ to the point $(a_0, a_1,
\ldots, a_n)$. Then the linear map $\theta : \RTn \rightarrow
\mathbb{R}^{n+1}$ given by $\theta(P) = \left(P(\xi), P^{[1]}(\xi),
\ldots, P^{[n]}(\xi)\right)$ has determinant 1 and so \Cq\ has
volume $\prod_{k=0}^{n}\left( 2q^{2k-n} \right) = 2^{n+1}$. Since
the lattice $\ZTn$ has co-volume 1 (it is identified with
$\mathbb{Z}^{n+1}$), Minkowski`s second convex body theorem shows
that the successive minima $\lambda_1, \ldots, \lambda_{n+1}$ of
\Cq\ with respect to \ZTn\ satisfy $\left( (n+1)! \right)^{-1} \leq
\lambda_1 \cdots \lambda_{n+1} \leq 1$. Since $\lambda_2 \leq \ldots
\leq \lambda_{n+1} \leq 2^n$, this implies that $\lambda_1 \geq
\left( 2^{n^2}(n+1)! \right)^{-1}$.
\end{proof}

The construction of polynomials given by the next proposition uses
only the last assertion of Proposition \ref{prop_last_minimum}.

\begin{proposition}
\label{prop_exist_irred} Let $q$ be the denominator of a convergent
of $\xi$. There exist an irreducible polynomial $P(T) \in \ZT$ of
degree n and an irreducible monic polynomial $Q(T) \in \ZT$ of
degree $n+1$ satisfying
\[
c_5 q^{2k-n} \leq |P^{[k]}(\xi)|, |Q^{[k]}(\xi)| \leq 3 c_5 q^{2k-n}
\quad (0 \leq k \leq n)
\]
where $c_5 = (n+1) 2^{n+1}$.
\end{proposition}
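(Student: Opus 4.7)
Plan. My plan is to build both $P$ and $Q$ out of the basis $\{P_0,\ldots,P_n\}$ of $\ZTn$ contained in $2^n\,\Cq$ furnished by the last assertion of Proposition \ref{prop_last_minimum}. I identify $\RTn$ with $\R^{n+1}$ through the evaluation map $\theta(P)=(P^{[0]}(\xi),\ldots,P^{[n]}(\xi))$, which has determinant $1$ as noted in the preceding proof. Then $\{\theta(P_0),\ldots,\theta(P_n)\}$ is an $\R$-basis of $\R^{n+1}$ satisfying the uniform coordinate bound $|\theta(P_j)_k|\leq 2^n q^{2k-n}$ for all $j,k$.

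To construct $P$, I would fix the target $w\in\R^{n+1}$ with $w_k=2c_5 q^{2k-n}$ and expand $w=\sum_{j=0}^n\beta_j\,\theta(P_j)$ with $\beta_j\in\R$. Letting $a_j\in\mathbb{Z}$ be the nearest integer to $\beta_j$, so that $|a_j-\beta_j|\leq 1/2$, I set $P=\sum_j a_j P_j\in\ZTn$. The triangle inequality gives
\[
|P^{[k]}(\xi)-w_k| \leq \sum_j|a_j-\beta_j|\cdot|P_j^{[k]}(\xi)| \leq \tfrac12(n+1)\cdot 2^n q^{2k-n} = \tfrac14 c_5\,q^{2k-n}
\]
for every $k$, whence $|P^{[k]}(\xi)|$ lies in $\bigl[(7/4)c_5,(9/4)c_5\bigr]\,q^{2k-n}\subset[c_5,3c_5]\,q^{2k-n}$. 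Since $P^{[n]}(\xi)$ equals the leading coefficient of $P$, its non-vanishing forces $\deg P=n$.

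For $Q$ monic of degree $n+1$, I would write $Q=T^{n+1}+R$ with $R\in\ZTn$ and observe that $Q^{[k]}(\xi)=R^{[k]}(\xi)+\binom{n+1}{k}\xi^{n+1-k}$ for $0\leq k\leq n$. Applying the same rounding procedure to the shifted target $w'_k=2c_5 q^{2k-n}-\binom{n+1}{k}\xi^{n+1-k}$ yields an $R$ for which $Q=T^{n+1}+R$ is automatically monic of degree $n+1$ and has $|Q^{[k]}(\xi)|$ in the same interval as for $P$.

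The main obstacle is ensuring that $P$ and $Q$ can be chosen irreducible. The construction offers genuine flexibility: one may flip the sign of each coordinate of the target independently, producing $2^{n+1}$ admissible targets, and further perturb the rounded polynomial by any $\sum_j\epsilon_j P_j$ with $\epsilon_j\in\{-1,0,1\}$ without leaving the admissible region, since each such perturbation shifts $P^{[k]}(\xi)$ by at most $(n+1)\,2^n q^{2k-n}=(c_5/2)\,q^{2k-n}$, which keeps $|P^{[k]}(\xi)|$ inside $[(5/4)c_5,(11/4)c_5]\,q^{2k-n}\subset[c_5,3c_5]\,q^{2k-n}$. This produces a family of admissible integer polynomials whose cardinality depends only on $n$. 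The hard part will be showing that at least one member of each such family is irreducible; I expect this to combine a resultant/discriminant estimate for reducible admissible polynomials (whose roots must cluster within $\ll q^{-2}$ of $\xi$) with the fact that reducible polynomials lie on a proper Zariski-closed subset of $\RTn$, so that among the large admissible family at least one choice escapes every reducibility locus.
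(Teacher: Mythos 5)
Your construction of $P$ and $Q$ from the basis $\{P_0,\ldots,P_n\}\subset 2^n\Cq$ and the nearest-integer rounding are correct and give the desired bounds; the arithmetic checks out, and the shifted-target trick for the monic case is also the right move. However, the proposal does not prove the proposition: you explicitly defer the irreducibility of $P$ and $Q$ to a sketch that is not carried out, and that sketch is unlikely to close the gap as stated. Flipping signs of the target and adding perturbations $\sum_j\epsilon_j P_j$ with $\epsilon_j\in\{-1,0,1\}$ produces a family whose cardinality depends only on $n$, but there is no reason a priori that this finite set of lattice points avoids the reducibility locus; the fact that reducible polynomials form a proper Zariski-closed subset of $\RTn$ gives no information about a finite collection of integer points. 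A genuine counting or resultant argument would have to be supplied, and it would be considerably harder than the problem warrants.

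The paper sidesteps this entirely by choosing the rounding in a residue class modulo $4$. One first writes $T^n+2=\sum_j b_j P_j$ with $b_j\in\mathbb{Z}$ (possible since $\{P_j\}$ is a $\mathbb{Z}$-basis of $\ZTn$), and then picks $a_j\in\mathbb{Z}$ with $a_j\equiv b_j\pmod 4$ and $|a_j-\theta_j|\leq 2$ — this is always possible since every residue class modulo $4$ meets any real interval of length $4$. Then $P=\sum_j a_j P_j$ is congruent to $T^n+2$ modulo $4$, hence irreducible by Eisenstein's criterion at the prime $2$, and in particular of exact degree $n$. The price is that the rounding error bound degrades from $1/2$ to $2$, but the constant $c_5=(n+1)2^{n+1}$ is chosen precisely so that $\sum_j|a_j-\theta_j|\,|P_j^{[k]}(\xi)|\leq (n+1)\cdot 2\cdot 2^n q^{2k-n}=c_5\,q^{2k-n}$ and the bounds $c_5\,q^{2k-n}\leq |P^{[k]}(\xi)|\leq 3c_5\,q^{2k-n}$ still follow since the target is $2c_5\,q^{2k-n}$. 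The same device, applied to $T^{n+1}+2$, handles the monic polynomial $Q$. So the missing idea in your proposal is exactly this: use the freedom in the rounding to place $P$ (resp.\ $Q$) in a fixed congruence class modulo $4$ to which Eisenstein applies, rather than trying to establish irreducibility after the fact.
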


Note that such polynomials have height $\asymp q^n$.

\begin{proof}
The last assertion of Proposition \ref{prop_last_minimum} tells us
the existence of a basis $\{P_0, \ldots, P_{n} \}$ of \ZTn\
satisfying
\begin{equation} \label{eq_borne_sup_p}
|P_j^{[k]}(\xi)| \leq 2^n q^{2k-n} \quad (0 \leq j, k \leq n).
\end{equation}
Since $\{P_0, \ldots, P_{n} \}$ is a basis of \ZTn\ over
$\mathbb{Z}$, we can write $T^n + 2 = \sum_{j=0}^n b_j P_j(T)$ for
some $b_0, \ldots, b_n \in \mathbb{Z}$. Consider the polynomial
\[
R(T) = 2 c_5 \sum_{k=0}^n q^{2k-n} (T-\xi)^k
\]
where $c_5 = (n+1) 2^{n+1}$. Since $\{P_0, \ldots, P_{n} \}$ is also
a basis of \RTn\ over $\mathbb{R}$, we can also write $R(T) =
\sum_{j=0}^n \theta_j P_j(T)$ for some $\theta_0, \ldots, \theta_n
\in \mathbb{R}$. Choose integers $a_0, \ldots, a_n$ such that $a_j
\equiv b_j \mod 4$ and $|a_j - \theta_j| \leq 2$ for $j=0, \ldots,
n$, and define $P(T) = \sum_{j=0}^n a_j P_j(T)$.

By construction $P(T)$ belongs to $\ZTn$ and is congruent to $T^n +
2$ modulo 4. Thus it is a polynomial of degree $n$ over $\mathbb{Q}$
 and it is irreducible by virtue of Eisenstein's criterion (for the prime 2).
Since $P(T) - R(T) = \sum_{j=0}^n \left( a_j - \theta_j \right)
P_j(T)$, we deduce from (\ref{eq_borne_sup_p}) that
\[
|P^{[k]}(\xi) - R^{[k]}(\xi)| \leq \sum_{j=0}^n \left| a_j -
\theta_j \right| \left| P_j^{[k]}(\xi) \right| \leq c_5 q^{2k-n}
\quad (0 \leq k \leq n).
\]
Since $R^{[k]}(\xi) = 2 c_5 q^{2k-n}$, it follows that $c_5 q^{2k-n}
\leq |P^{[k]}(\xi)| \leq 3 c_5 q^{2k-n}$ for $k=0,\ldots, n$, as
required.

The construction of $Q(T)$ is similar. Write
\[
T^{n+1} + 2 = T^{n+1} + \sum_{j=0}^n b_j' P_j(T) \quad \textrm{and}
\quad (T-\xi)^{n+1} + R(T) = T^{n+1} + \sum_{j=0}^n \theta_j'
P_j(T),
\]
with $b_0', \ldots, b_n' \in \mathbb{Z}$ and $\theta_0', \ldots,
\theta_n' \in \mathbb{R}$, and choose integers $a_0', \ldots, a_n'$
such that $a_j' \equiv b_j' \mod{4}$ and $|a_j' - \theta_j'| \leq 2$
for $j=0, \ldots, n$. Then the polynomial
\[
Q(T) = T^{n+1} + \sum_{j=0}^n a_j' P_j(T) \in \ZT
\]
is irreducible (by virtue of Eisenstein's criterion for 2), monic of
degree $n+1$, and it satisfies also $|Q^{[k]}(\xi) - R^{[k]}(\xi)|
\leq c_5 q^{2k-n}$ for $k=0, \ldots, n$.
\end{proof}
%
%
\section{Proof of Theorems A and B}
In this section, we prove the main theorems A and B of the
introduction by combining Proposition \ref{prop_exist_irred} with
the following result.

\begin{proposition}\label{prop_equivalence_racines}
Let $\xi \in \mathbb{R}$, let $n \in \mathbb{N}^*$, let $\delta > 0$
and let \cP\ be a subset of \ZT\ . Suppose that the elements of \cP\
are either polynomials of degree $n$ or monic polynomials of degree
$n+1$. Then the following conditions are equivalent :
\begin{itemize}
\item[(i)] There exists a constant $c_6 > 0$ such that
$|P^{[k]}(\xi)| \leq c_6 \Hp^{1-(n-k)\delta}$ for each $P \in \cP$
and each $k=0,1, \ldots, n$.
\item[(ii)] There exists a constant $c_7 > 0$ such that $|\xi - \alpha| \leq
c_7 \Hp^{-\delta}$ for each $P \in \cP$ and for $n$ of the roots
$\alpha$ of $P$, counting multiplicity.
\end{itemize}
\end{proposition}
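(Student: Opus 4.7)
The plan is to base both directions on the identity
\[
   P^{[k]}(\xi) = c\, e_{m-k}(\xi - \alpha_1, \ldots, \xi - \alpha_m), \quad 0 \leq k \leq m,
\]
where $m = \deg P \in \{n, n+1\}$, $c$ is the leading coefficient of $P$, the $\alpha_i$ are the roots of $P$, and $e_j$ is the $j$-th elementary symmetric polynomial. This comes from writing $P(T) = c \prod_i \bigl( (T-\xi) - (\alpha_i - \xi) \bigr)$ and collecting the coefficient of $(T-\xi)^k$.

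For (ii)$\Rightarrow$(i), I would label the roots so that $\alpha_1,\ldots,\alpha_n$ are the small ones, with $|\xi - \alpha_i| \leq c_7 \Hp^{-\delta}$. In the monic degree $n+1$ case, Cauchy's bound gives $|\alpha_{n+1}| \leq 1 + \Hp$, so $|\xi - \alpha_{n+1}| \ll \Hp$. Plugging these into the identity above and using the trivial bound $|c| \leq \Hp$ in the degree $n$ case immediately yields $|P^{[k]}(\xi)| \ll \Hp^{1-(n-k)\delta}$ uniformly in $P \in \cP$.

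For (i)$\Rightarrow$(ii), the decisive intermediate step is the lower bound $|P^{[n]}(\xi)| \gg \Hp$, valid once $\Hp$ exceeds a threshold depending on $\xi, n, \delta, c_6$. Re-expanding $P(T) = \sum_k P^{[k]}(\xi) (T-\xi)^k$ in powers of $T$ writes each coefficient of $P$ as a fixed linear combination of the $P^{[k]}(\xi)$ with weights $\binom{k}{i}(-\xi)^{k-i}$. Under (i) the $k \leq n-1$ part is $O(\Hp^{1-\delta})$, the $k = n+1$ part (monic case) contributes only $O(1)$, and the $k = n$ part is $\binom{n}{i}(-\xi)^{n-i} P^{[n]}(\xi)$; maximizing over $i$ and comparing with $\Hp$ then forces the claimed lower bound.

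Once this is secured, I would apply Rouche's theorem to $Q(U) := P(\xi + U) = \sum_{k=0}^m P^{[k]}(\xi)\, U^k$ on the circle $|U| = r = c_7 \Hp^{-\delta}$, with $c_7$ taken large. There the term $P^{[n]}(\xi) U^n$ has size $\asymp c_7^n \Hp^{1-n\delta}$, each $P^{[k]}(\xi) U^k$ with $k < n$ is smaller by a factor $\ll c_6 c_7^{k-n}$, and in the monic case $|U^{n+1}|$ is smaller by $\ll c_7 \Hp^{-1-\delta}$. Hence $P^{[n]}(\xi) U^n$ dominates the sum of the other terms, and Rouche produces exactly $n$ zeros of $Q$ in $|U| < r$; these yield the $n$ roots of $P$ required by (ii), with the finitely many polynomials in $\cP$ of height below the threshold absorbed into $c_7$. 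The main obstacle is precisely the intermediate lower bound $|P^{[n]}(\xi)| \gg \Hp$: without it the Rouche argument has no dominant term, since on $|U| = \Hp^{-\delta}$ the terms $P^{[k]}(\xi) U^k$ all have the same a priori magnitude $\Hp^{1-n\delta}$, and this is the only place where the bounds in (i) are used simultaneously across all indices $k$.
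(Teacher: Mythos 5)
Your proof is correct, and it takes a genuinely different route from the paper in the direction (i) $\Rightarrow$ (ii). The paper's proof avoids complex analysis entirely: it studies the rescaled polynomial $R(T)=P(\varepsilon T+\xi)$ with $\varepsilon=\Hp^{-\delta}$, computes its Mahler measure $M(R)=L\prod_{k=1}^n\max\{\varepsilon,|\xi-\alpha_k|\}$ (with $L=|a_0|$ or $\max\{\varepsilon,|\xi-\alpha_m|\}$), and pits the two standard inequalities $M(R)\leq(m+1)H(R)$ and $H(R)\leq 2^m M(R)$ against each other. The crucial intermediate fact is the same as yours in disguise: the paper proves $\Hp\ll L$, which in the degree-$n$ case is literally $|P^{[n]}(\xi)|\gg\Hp$ and in the monic degree-$(n+1)$ case is the substitute for it; your derivation of $|P^{[n]}(\xi)|\gg\Hp$ by re-expanding $\sum_k P^{[k]}(\xi)(T-\xi)^k$ in powers of $T$ is essentially the same coefficient-comparison the paper runs via $\Hp\asymp\max_k|P^{[k]}(\xi)|$. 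From there the two arguments diverge: the paper gets $|\xi-\alpha_n|\ll\varepsilon$ immediately from $L\varepsilon^{n-1}|\xi-\alpha_n|\leq M(R)\ll H(R)\ll\varepsilon^n L$, while you invoke Rouch\'e on $|U|=c_7\Hp^{-\delta}$ to localize $n$ roots. Your Rouch\'e argument is sound (the dominance ratios $c_6 c_7^{k-n}$ for $k<n$ and $c_7\Hp^{-1-\delta}$ for $k=n+1$ are exactly right, and the finitely many small-height polynomials are correctly absorbed into $c_7$), and it makes the geometric picture --- $n$ roots clustering inside the small disk because $P(\xi+U)$ looks like $P^{[n]}(\xi)U^n$ there --- explicit, at the cost of bringing in a tool the paper does without. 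Your (ii) $\Rightarrow$ (i) via elementary symmetric functions of $\xi-\alpha_i$, with Cauchy's bound controlling the stray $(n+1)$-st root, is essentially the Mahler-measure computation written out term by term and is a minor variant rather than a different method. One small point worth making explicit in your write-up: when you pick the index $i$ realizing $\Hp=|c_i|$, the weight $\binom{n}{i}(-\xi)^{n-i}$ on $P^{[n]}(\xi)$ could vanish if $\xi=0$ and $i<n$; but in that degenerate case $c_i=P^{[i]}(\xi)$ is itself $O(\Hp^{1-\delta})$, forcing $\Hp$ bounded, so the threshold absorbs it.
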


\begin{proof}
Fix $P \in \cP$ and write it in the form
\[
P(T) = a_0(T-\alpha_1) \cdots (T-\alpha_m)
\]
where $m=\deg{P}$ and $\alpha_1, \ldots, \alpha_m$ are the roots of
$P$ ordered so that $|\xi-\alpha_1| \leq \ldots \leq
|\xi-\alpha_m|$. We put $\varepsilon = \Hp^{-\delta}$ and consider
the polynomial
\[
R(T) = P(\varepsilon T + \xi) = a_0 \varepsilon^m \prod_{k=1}^m
\left( T + \varepsilon^{-1}\left( \xi - \alpha_k \right) \right).
\]
The height of R is
\[
H(R) = \max_{0\leq k \leq m} \left| R^{[k]}(0)\right| = \max_{0 \leq
k \leq m} \left| P^{[k]}(\xi) \right| \varepsilon^k,
\]
and its Mahler measure is
\[
M(R) = |a_0| \varepsilon^m \prod_{k=1}^m \max\left\{1,
\varepsilon^{-1}|\xi - \alpha_k| \right\} = |a_0| \prod_{k=1}^m
\max\left\{\varepsilon, |\xi - \alpha_k| \right\}.
\]
For convenience, we also define
\[
L = \left\{ \begin{array}{ll} |a_0| & \textrm{if} \quad m=n \\
\max\left\{ \varepsilon, |\xi - \alpha_m| \right\} & \textrm{if}
\quad m=n+1 \end{array} \right .
\]
so that the formula for $M(R)$ becomes
\[
M(R) = L \prod_{k=1}^n \max\left\{ \varepsilon, |\xi - \alpha_k|
\right\}
\]
(recall that $a_0 = 1$ when $m=n+1$). Our argument below is based on
the standard inequalities relating these notions of heights, namely
\[
M(R) \leq (m+1) H(R) \quad \quad \textrm{and} \quad \quad H(R) \leq
2^m M(R).
\]
If condition (ii) holds, we find that $M(R) \leq c_7^n \varepsilon^n
L$. We also have $L \ll H(P)$ since $|a_0| \leq H(P)$ and since
$|\xi - \alpha| \ll \max\left\{ 1, |\alpha|\right\} \ll H(P)$ for
any root $\alpha$ of $P$. Then, for each $k=0, \ldots, n$, we obtain
\[
|P^{[k]}(\xi)| \ll \varepsilon^{-k} H(R) \ll \varepsilon^{-k} M(R)
\ll \varepsilon^{n-k} H(P)
\]
which shows that condition (i) holds.

Conversely assume that condition (i) holds. In this case we find
that $H(R) \leq c_6 \varepsilon^n H(P)$. We claim that $H(P) \ll L$.
If we take this for granted, we deduce that
\[
L \varepsilon^{n-1} |\xi - \alpha_n| \leq M(R) \ll H(R) \ll
\varepsilon^n L
\]
which implies that condition (ii) holds.

To prove the claim, we observe that
\[
H(P) \asymp H(P(T+\xi)) = \max_{0 \leq k \leq m} |P^{[k]}(\xi)|.
\]
By hypothesis, we have $|P^{[k]}(\xi)| \leq c_6 H(P)^{1-\delta}$ for
$k=0, \ldots, n-1$ and we also have $|P^{[m]}(\xi)| = 1$ if $m=n+1$.
Finally, we have $|P^{[n]}(\xi)| = |a_0|$ if $m=n$, and
$|P^{[n]}(\xi)| = |\sum_{k=1}^m (\xi - \alpha_k) | \leq m |\xi -
\alpha_m|$ if $m=n+1$, showing that $|P^{[n]}(\xi)| \ll L$. All this
implies that
\[
H(P) \ll \max\{1,L\}.
\]
Since $L \geq \varepsilon = \Hp^{-\delta}$, this in turn implies
that $H(P) \ll L$.
\end{proof}

\begin{proof}[Proof of the theorems]
Let $\xi \in \R \setminus \mathbb{Q}$ and $n \in \mathbb{N}^*$. We
simply prove Part (ii) of Theorems A and B since the proof of Part
(i) is similar and slightly easier.

For each denominator $q$ of a convergent of $\xi$, Proposition
\ref{prop_exist_irred} shows the existence of an irreducible monic
polynomial $Q \in \ZT$ of degree $n+1$ satisfying $H(Q) \asymp q^n$
and
\[
|Q^{[k]}(\xi)| \leq c_6 H(Q)^{(2k-n)/n} = c_6 H(Q)^{1-(n-k)(2/n)},
\quad (0 \leq k \leq n)
\]
for some constant $c_6 = c_6(\xi, n)$. The family $\mathcal{P}$ of
these polynomials satisfies the condition (i) of Proposition
$\ref{prop_equivalence_racines}$ for the choice $\delta = 2/n$, and
so it satisfies also the condition (ii) of the same proposition for
the same value of $\delta$ and for some constant $c_7$. For each $Q
\in \mathcal{P}$, choose a root $\alpha$ of $Q$ for which $|\xi -
\alpha|$ is maximal. Since $Q$ is irreducible, this root $\alpha$ is
an algebraic integer of degree $n+1$ and height $H(\alpha) = H(Q)$
whose conjugates $\overline{\alpha}$ over $\mathbb{Q}$ are the $n+1$
distinct roots of $Q$. Therefore, we get $\max_{\overline{\alpha}
\neq \alpha}|\xi - \overline{\alpha} | \leq c_7 H(\alpha)^{-2/n}$.
This proves Part (ii) of Theorem A since we find infinitely many
such numbers $\alpha$ by varying $Q$.

If $\xi$ is badly approximable, the ratios of the denominators of
consecutive convergents of $\xi$ are bounded. Thus, for each $X \geq
1$, there exists such a denominator $q$ with $q \asymp X^{1/n}$, and
so there exists a polynomial $Q \in \mathcal{P}$ with $H(Q) \asymp
X$. Consequently, the root $\alpha$ of $Q$ that we chose above
satisfies $H(\alpha) \asymp X$ and this proves Part (ii) of Theorem
B.
\end{proof}

\medskip
\noindent{\bf Acknowledgments.} The author thanks his MSc thesis
supervisor Damien Roy for suggesting this problem and for his help
in writing the present paper.

\end{document}